\documentclass[10pt,a4paper,oneside]{amsart}
\usepackage{graphicx}
\usepackage{cases}
\usepackage{xcolor}
\usepackage[pdfencoding=auto]{hyperref}
\newtheorem{theorem}{Theorem}[section]
\newtheorem{corollary}[theorem]{Corollary}
\newtheorem{proposition}[theorem]{Proposition}
\newtheorem{lemma}[theorem]{Lemma}

\theoremstyle{definition}
\newtheorem{definition}{Definition}
\newtheorem{example}{Example}

\theoremstyle{remark}
\newtheorem{remark}[theorem]{Remark}

\numberwithin{equation}{section}

\newcommand{\dF}{\tilde{F}}

\usepackage{float}
\usepackage{ulem} 
\usepackage{multirow} 
\begin{document}
\title{The deranged Bell numbers}
\author{BELBACHIR Hac\`ene}
\address{}
\curraddr{}
\email{}
\thanks{}

\author{DJEMMADA Yahia}
\address{}
\curraddr{}
\email{}
\thanks{}

\author{N\'EMETH L\`aszl\`{o}}
\address{}
\curraddr{}
\email{}
\thanks{}

\subjclass[2000]{Primary: 11B73; Secondary: 05A18, 05A05.}
\date{}

\begin{abstract}
    It is known that the ordered Bell numbers count all the ordered partitions of the set $[n]=\{1,2,\dots,n\}$. 
    In this paper, we introduce the deranged Bell numbers that count the total number of deranged partitions of $[n]$.
    We first study the classical properties of these numbers (generating function, explicit formula, convolutions, etc.), 
    we then present an asymptotic behavior of the deranged Bell numbers. Finally, we give some brief results for their $r$-versions.
\end{abstract}
\maketitle
\section{Introduction}
A \textit{permutation} $\sigma$ of a finite set $[n] := \{1,2, \dots, n\}$ is a rearrangement (linear
ordering) of  the elements of $[n]$, and we denote it by

$$\sigma([n])=\sigma(1)\sigma(2)\cdots\sigma(n).$$

A \textit{derangement} is a permutation $\sigma$ of $[n]$ that verifies $\sigma(i) \neq i$ for all $ (1 \leq i \leq n)$ (fixed-point-free permutation). The derangement number $d_n$ denotes the number of all derangements of the set $[n]$. A simple combinatorial approach yields the two recursions for $d_n$ (see for instance \cite{Lo})
\begin{equation*} \label{drec1}
    d_{n}=(n-1)(d_{n-1}+d_{n-2}) \quad ( n \geq 2)
\end{equation*} 
and
\begin{equation*}\label{drec2}
    d_{n}=n d_{n-1}+(-1)^n \quad ( n \geq 1),
\end{equation*}
with the first values $d_0=1$ and $d_1=0$.

The derangement number satisfies the explicit expression (see \cite{Bon})
\begin{equation*} \label{dexp}
    d_n=n!\sum_{i=0}^{n}\frac{(-1)^{i}}{i!}.    
\end{equation*}

The generating function of the sequence $d_n$ is given by
\begin{equation*}\label{degf}
    \mathcal{D}(t)=\sum_{n \geq 0} d_n \frac{t^n}{n!}=\frac{e^{-t}}{1-t}.    
\end{equation*}
The first few values of $d_n$ are
$$(d_n)_{n \geq 0}=\{1, 0, 1, 2, 9, 44, 265, 1854, 14833, 133496, 1334961, \dots\}.$$

For more details about derangement numbers we refer readers to \cite{Ch,Lo,Wa} and the references therein.

A \textit{partition} of a set $[n] := \{1, 2, \dots ,n\}$ is a distribution  of their elements to $k$ non-empty disjoint subsets $B_1|B_2|\dots|B_k$ called \textit{blocks}.
We assume that the blocks are arranged in ascending order according to their minimum elements ($\min B_1<\min B_2<\cdots<\min B_k$).

It is well-known that the \textit{Stirling numbers of the second kind}, denoted ${n \brace k}$, count the partitions' number of the set $[n]$ into $k$ non-empty blocks. The numbers ${n \brace k}$ satisfy the recurrence
\begin{equation*}\label{Stirec}
{n \brace k}={n -1\brace k-1}+k{n-1 \brace k} \qquad ( 1\leq k \leq n),
\end{equation*}
with ${n \brace 0}=\delta_{n,0}$ (Kronecker delta) and ${n \brace k}=0 \quad (k>n)$.

An \textit{ordered partition} $\psi$ of a set $[n]$ is a permutation $\sigma$ of the partition $B_1|B_2|\dots|B_k$, in other words, we consider all the orders of the blocks, 

$$\psi([n])=B_{\sigma(1)}|B_{\sigma(2)}|\dots|B_{\sigma(k)}.$$

For notation, throughout this paper we represent the elements of the same block by adjacent numbers and we separate the blocks by bars $"|"$.
\begin{example}
    The partitions of the set $[3]=\{1,2,3\}$ are:
    $${123};{1|23};{12|3};{13|2};{1|2|3},$$
and its ordered partitions are the permutations of all the partitions above:
$${123};{1|23};{23|1};{12|3};{3|12};{13|2};{2|13};{1|2|3};{1|3|2};{2|1|3};{2|3|1};{3|1|2};{3|2|1}.$$
\end{example}

The total number of the ordered partitions of the set $[n]$ is known as the \textit{ordered Bell number} or \textit{Fubini number} \cite{Com,Man}, denoted by $F_n$, which is given by
\begin{equation*}\label{Fubdef}
    F_n=\sum_{k=0}^n k! {n \brace k}.
\end{equation*}

The first few values of the ordered Bell numbers are
$$(F_n)_{n \geq 0}=\{1, 1, 3, 13, 75, 541, 4683, 47293, 545835, 7087261, 102247563, \dots\}.$$

The explicit formula for the Stirling number of the second kind
\begin{equation*}\label{Stiexp}
    {n \brace k}=\frac{1}{k!}\sum_{j=0}^{k}(-1)^{k-j}{k \choose j} j^n
\end{equation*} 
follows the explicit formula for the ordered Bell number
\begin{equation*}\label{Fubexp}
    F_n=\sum_{k=0}^n\sum_{j=0}^{k}(-1)^{k-j}{k \choose j} j^n.
\end{equation*}

The exponential generating function for $F_n$ is given by 
\begin{equation}\label{Fubegf}
    \mathcal{F}(t)=\sum_{n \geq 0}F_n\frac{t^n}{n!}=\frac{1}{2-e^t}.
\end{equation}
We note that if the order of the blocks does not matter, then the total number of partitions of a set $[n]$ is given by \text{Bell numbers}
\begin{equation*}\label{Beldef}
    B_n=\sum_{k=0}^n {n \brace k}.
\end{equation*}
The exponential generating function for $B_n$ is
\begin{equation}\label{Belegf}
    \mathcal{B}(t)=\sum_{n \geq 0}B_n\frac{t^n}{n!}=e^{e^t-1}.
\end{equation}

Most of the previous works focused on the some restrictions and generalizations of Stirling number of second kind to introduce new classes of ordered Bell number (see \cite{Cai} and the references given there).

The aim of our paper is to introduce and study a new classes of ordered partitions numbers by taking into account the derangement of blocks (or permutations without fixed blocks).

\section{The deranged Bell numbers }
In this section, we introduce the notion of deranged partition and we study the deranged Bell numbers.
\begin{definition}
    A \textit{deranged partition} $\tilde{\psi}$ of the set $[n]$ is a derangement $\tilde{\sigma}$ of the partition $B_1|B_2|\dots|B_k$, i.e.,
     $$\tilde{\psi}([n])=B_{\tilde{\sigma}(1)}|B_{\tilde{\sigma}(2)}|\dots|B_{\tilde{\sigma}(k)}$$
    such that $B_{\tilde{\sigma}(i)} \neq B_{i}$ for all $(1 \leq i \leq k)$.
\end{definition}
\begin{definition}
Let $\dF_n$ be the \textit{deranged Bell number} which counts the total number of the deranged partitions of the set $[n]$.
\end{definition}
\begin{proposition} For all $n \geq 0$ we have that
    \begin{equation}\label{defdF}
        \dF_n=\sum_{k=0}^n d_k {n \brace k}.
    \end{equation}
\end{proposition}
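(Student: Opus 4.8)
The plan is to count the deranged partitions of $[n]$ directly, via the multiplication principle, by classifying them according to their number of blocks. I would begin by observing that every deranged partition arises uniquely from a two-stage construction: first one chooses an (unordered) partition of $[n]$ into some number $k$ of non-empty blocks $B_1\,|\,B_2\,|\,\cdots\,|\,B_k$, listed in the canonical order $\min B_1<\cdots<\min B_k$; then one chooses a permutation $\tilde{\sigma}$ of $[k]$ subject to the fixed-point-free constraint appearing in the definition of a deranged partition. Every deranged partition $\tilde{\psi}$ decomposes in exactly one way into such a pair (partition, permutation), so counting pairs counts deranged partitions.

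The key step is to verify that the constraint $B_{\tilde{\sigma}(i)}\neq B_i$ for all $i$ is precisely the condition that $\tilde{\sigma}$ be a derangement of the index set $[k]$. This holds because the blocks of a set partition are pairwise distinct subsets of $[n]$, whence $B_{\tilde{\sigma}(i)}=B_i$ if and only if $\tilde{\sigma}(i)=i$. Consequently the admissible reorderings of the blocks are in bijection with the derangements of $[k]$, so there are exactly $d_k$ of them; crucially, this count is $d_k$ regardless of which particular partition into $k$ blocks was selected, since it depends only on the number of blocks, not on their internal structure.

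I would then assemble the total. For each fixed $k$ with $0\le k\le n$ there are ${n \brace k}$ partitions of $[n]$ into $k$ blocks, and, by the previous paragraph, each such partition yields $d_k$ deranged partitions. These families are pairwise disjoint (a deranged partition has a well-defined number of blocks) and together exhaust all deranged partitions as $k$ ranges over $0,\dots,n$. Summing the products ${n \brace k}\,d_k$ over $k$ by the addition principle gives $\dF_n=\sum_{k=0}^n d_k {n \brace k}$, which is \eqref{defdF}.

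There is no serious obstacle here; the one point that deserves care is the equivalence established in the key step, namely that the distinctness of the blocks lets us replace the block-level fixed-point-free condition by the ordinary derangement condition on the indices, so that the inner factor is cleanly $d_k$ and independent of the chosen partition. As a sanity check I would note the degenerate cases: the term $k=0$ contributes only when $n=0$, giving $d_0{0 \brace 0}=1$, and the term $k=1$ contributes nothing because $d_1=0$, which correctly reflects that a single block cannot be deranged.
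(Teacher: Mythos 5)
Your proof is correct and follows essentially the same approach as the paper's: classify deranged partitions by the number of blocks $k$, count ${n \brace k}$ partitions times $d_k$ derangements of the blocks, and sum over $k$. The paper states this in two lines; your version merely makes explicit the (worthwhile but routine) observation that distinctness of blocks identifies the fixed-block-free condition with an ordinary derangement of the indices.
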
  
\begin{proof} 
    Since $\displaystyle {n \brace k}$ counts the number of partitions of $[n]$ into $k$ blocks, then the number of deranged partitions of $[n]$ having $k$ blocks is $ \displaystyle d_k {n \brace k}$ (derangement of blocks).
    Therefore the $n^{\text{th}}$ deranged Bell number is $\displaystyle \dF_n=\sum_{k=0}^n d_k {n \brace k}.$ 
\end{proof}

Here are the first few values of $\dF_n$:
$$(\dF_n)_{n\geq 0}=\{1, 0, 1, 5, 28, 199, 1721, 17394, 200803, 2607301, 37614922,\dots\}.$$
In Tables \ref{DP3} and \ref{DP4}, we give few examples of the deranged permutations.
\begin{table}[H] \renewcommand{\arraystretch}{1.2}
\begin{center}
	\begin{tabular}{|c|c|c|c|} 
		\hline 
		Set         &Partition $b_1|b_2|\cdots |b_k$  &Deranged\ partitions& $\dF_n$ \\
		\hline
		\multirow{4}{*}{}& 123                           &      $\emptyset$            &\\
		\cline{2-3}
		& $1|23$                          &$23|1 $               &       \\
		\{1,2,3\}   & $12|3$                          &$3|12$                &    5   \\
		& $13|2$                          &$2|13$                &       \\
		\cline{2-3}
		& $1|2|3$                         &$2|3|1 \ \ 3|1|2$    &       \\
		\hline
	\end{tabular}
\end{center}
	\caption{Deranged partitions of the set $[3]$.}
	\label{DP3}
\end{table}

    \begin{table}[H]  \renewcommand{\arraystretch}{1.2}
        $$\begin{array}{|c|c|c|c|}
    \hline
    \text{Set}         &\text{Partition}\ b_1|b_2|\cdots |b_k  &\text{Deranged partitions}& \dF_n  \\
    \hline            
    \multirow{5}{*}{}   &1234       &            \emptyset                 &\\
    \cline{2-3}
                        &1|234      &234|1                          &\\
                        &12|34      &34|12                          &\\
                        &134|2      &2|134                          &\\
                        &123|4      &4|123                          &\\
                        &14|23      &23|14                          &\\
                        &124|3      &3|124                          &\\
                        &13|24      &24|13                          &\\
    \cline{2-3}
    \{1,2,3,4\}         &1|2|34     &2|34|1\ \ 34|1|2               &28\\
                        &1|23|4     &23|4|1\ \ 4|1|23               &\\
                        &1|24|3     &24|3|1\ \ 3|1|24               &\\
                        &12|3|4     &3|4|12\ \ 4|12|3               &\\
                        &13|2|4     &2|4|13\ \ 4|13|2               &\\
                        &14|2|3     &2|3|14\ \ 3|14|2               &\\
    \cline{2-3}
                        &           &2|1|4|3\ \ 2|3|4|1\ \ 2|4|1|3  &\\
                        &1|2|3|4    &3|1|4|2\ \ 3|4|1|2\ \ 3|4|2|1  &\\
                        &           & 4|1|2|3\ \ 4|3|1|2\ \ 4|3|2|1 &\\
    \hline
    \end{array}$$
    \caption{Deranged partitions of the set $[4]$.}
    \label{DP4}
    \end{table}

\section{Fundamental properties}
Here are the fundamental properties of the deranged Bell numbers.
\subsection{Exponential generating function}
\begin{theorem}\label{dFegf} The exponential generating function of deranged Bell numbers is given by
    \begin{equation*}
        \mathcal{\dF}(t)=\sum_{n \geq 0}\dF_n\frac{t^n}{n!}=\frac{e^{-(e^t-1)}}{2-e^t}.
    \end{equation*}
\end{theorem}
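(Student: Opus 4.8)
The plan is to start from the explicit formula \eqref{defdF}, substitute it into the defining series for $\mathcal{\dF}(t)$, and recognize the outcome as the composition of the derangement generating function with the inner series $e^t-1$. First I would write
\[
\mathcal{\dF}(t)=\sum_{n \geq 0}\left(\sum_{k=0}^n d_k {n \brace k}\right)\frac{t^n}{n!}
\]
and interchange the order of summation. At the level of formal power series this is legitimate because, for each fixed power $t^n$, only the finitely many pairs $(n,k)$ with $0\le k\le n$ contribute. This rearranges the double sum into
\[
\mathcal{\dF}(t)=\sum_{k \geq 0} d_k \sum_{n \geq k}{n \brace k}\frac{t^n}{n!}.
\]

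The key step is the standard exponential generating function identity for the Stirling numbers of the second kind,
\[
\sum_{n \geq k}{n \brace k}\frac{t^n}{n!}=\frac{(e^t-1)^k}{k!},
\]
which I would either quote or re-derive from the recurrence for ${n \brace k}$ given in the introduction (equivalently, from the surjection interpretation of $k!\,{n \brace k}$). Inserting it yields
\[
\mathcal{\dF}(t)=\sum_{k \geq 0} d_k\,\frac{(e^t-1)^k}{k!}=\mathcal{D}(e^t-1),
\]
that is, the derangement EGF $\mathcal{D}(u)=e^{-u}/(1-u)$ evaluated at $u=e^t-1$.

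Finally I would perform the substitution $u=e^t-1$ into $\mathcal{D}(u)$, obtaining
\[
\mathcal{\dF}(t)=\frac{e^{-(e^t-1)}}{1-(e^t-1)}=\frac{e^{-(e^t-1)}}{2-e^t},
\]
which is the claimed formula. I do not expect a genuine obstacle here; the only point deserving a word of care is that substituting the series $e^t-1$ into $\mathcal{D}(u)$ must be a well-defined operation on formal power series, and this is precisely why the inner series is required to have zero constant term, which holds since $e^0-1=0$. The composition-of-EGFs mechanism used here is exactly the one that produces the Bell and Fubini generating functions \eqref{Belegf} and \eqref{Fubegf}.
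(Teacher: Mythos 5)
Your proposal is correct and follows essentially the same route as the paper's own proof: interchange the order of summation in $\sum_{n}\sum_{k} d_k {n \brace k}\frac{t^n}{n!}$, apply the Stirling EGF identity $\sum_{n\geq k}{n \brace k}\frac{t^n}{n!}=\frac{(e^t-1)^k}{k!}$, and recognize the result as the derangement EGF evaluated at $e^t-1$. Your added remarks on the legitimacy of the interchange and of formal composition (zero constant term of $e^t-1$) are correct refinements that the paper leaves implicit.
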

\begin{proof} Denote by $\mathcal{\dF}(t)$ the exponential generating function of the sequence $\dF_n$. From \eqref{defdF} we have 
    \begin{equation*}
            \mathcal{\dF}(t)=\sum_{n \geq 0}\sum_{k=0}^n d_k {n \brace k}\frac{t^n}{n!}
                            =\sum_{k \geq 0}d_k \sum_{n \geq 0} {n \brace k} \frac{t^n}{n!}
                            =\sum_{k \geq 0}d_k \frac{(e^t-1)^k}{k!}
                            =\frac{e^{-(e^t-1)}}{2-e^t}.
    \end{equation*}
    \end{proof}
    \subsection{Explicit formula}
    \begin{theorem} For any $ n \geq 0$, the sequence $\dF_n$ can be expressed explicitly as
        \begin{equation*}
            \dF_n=\sum_{k = 0}^n \sum_{i,j=0}^k\frac{(-1)^{k+i-j}}{i!}{k \choose j}j^n.
        \end{equation*}
    \end{theorem}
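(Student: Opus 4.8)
The plan is to start from the defining sum \eqref{defdF}, namely $\dF_n=\sum_{k=0}^n d_k {n \brace k}$, and simply substitute the known closed forms for the two factors. First I would recall the explicit expression for the derangement number, $d_k=k!\sum_{i=0}^{k}\frac{(-1)^{i}}{i!}$, together with the explicit expression for the Stirling number of the second kind, ${n \brace k}=\frac{1}{k!}\sum_{j=0}^{k}(-1)^{k-j}{k \choose j} j^n$. Both identities are already recorded in the introduction, so no new ingredient is required; the whole proof is an application of these two formulas.

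Next I would plug both expressions into \eqref{defdF}. The crucial simplification is that the factor $k!$ produced by $d_k$ cancels exactly against the factor $1/k!$ appearing in the Stirling formula, leaving
$$\dF_n=\sum_{k=0}^n\left(\sum_{i=0}^{k}\frac{(-1)^{i}}{i!}\right)\left(\sum_{j=0}^{k}(-1)^{k-j}{k \choose j} j^n\right).$$
For a fixed $k$ the two inner sums run over independent indices $i$ and $j$, so their product is the double sum $\sum_{i,j=0}^{k}$; collecting the powers of $-1$ through $(-1)^{i}\,(-1)^{k-j}=(-1)^{k+i-j}$ then gives
$$\dF_n=\sum_{k=0}^{n}\sum_{i,j=0}^{k}\frac{(-1)^{k+i-j}}{i!}{k \choose j} j^n,$$
which is exactly the stated identity.

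I expect essentially no genuine obstacle here: the argument is a direct substitution followed by routine bookkeeping of factorials and signs, and since every summation is finite there is no question of rearrangement or convergence. The only two places deserving a moment of care are verifying that the $k!$ cancellation is exact, and correctly merging the two independent inner sums into the single double sum $\sum_{i,j=0}^{k}$ without disturbing the summation ranges. Once these are checked the result follows immediately.
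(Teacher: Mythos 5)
Your proposal is correct and follows exactly the paper's proof: substitute the explicit formulas for $d_k$ and ${n \brace k}$ into \eqref{defdF}, cancel the $k!$ against the $1/k!$, and merge the two independent inner sums with the sign bookkeeping $(-1)^{i}(-1)^{k-j}=(-1)^{k+i-j}$. No differences worth noting.
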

    \begin{proof}
        From the explicit formulas of Stirling numbers  of the second kind and derangement number we have
    
        \begin{equation*}
            \resizebox{1\textwidth}{!}{$
            \displaystyle
                \dF_n   =\sum_{k=0}^n d_k {n \brace k}=\sum_{k=0}^n k!\sum_{i=0}^{k}\frac{(-1)^i}{i!} \frac{1}{k!}\sum_{j=0}^{k}(-1)^{k-j}{k \choose j} j^n=\sum_{k = 0}^n  \sum_{i,j=0}^k\frac{(-1)^{k+i-j}}{i!}{k \choose j}j^n.
                $}
        \end{equation*}
    \end{proof}
    \subsection{Dobi\`nski's formula}
    One of the most important result for Bell number was established by Dobi\`nski \cite{Dob,Eps,Rot}, where he expressed $w_n$ in the infinite series form bellow
    \begin{equation*}
        w_n=\frac{1}{e}\sum_{k \geq 0}\frac{k^n}{k!}.
    \end{equation*}
    An analogue result for the ordered Bell number was given by Gross \cite{Gro} as
    \begin{equation*}
        F_n=\frac{1}{2}\sum_{k \geq 0}\frac{k^n}{2^k}.
    \end{equation*}
    The Dobi\`nski's formula for $\dF_n$ is established by our next theorem
    \begin{theorem} For any $n \geq 0$ we have
        \begin{equation*}
            \dF_n=\frac{e}{2}\sum_{j\geq 0}\sum_{k\geq j}  \frac{(-1)^j}{j!}\frac{k^n}{2^{k-j}}.            
        \end{equation*}
    \end{theorem}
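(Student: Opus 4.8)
The plan is to read the identity directly off the exponential generating function established in Theorem~\ref{dFegf}, by expanding each factor as a series in $e^t$ and then extracting the coefficient of $t^n/n!$. First I would rewrite the numerator as $e^{-(e^t-1)}=e\,e^{-e^t}$, so that
\begin{equation*}
\mathcal{\dF}(t)=\frac{e^{-(e^t-1)}}{2-e^t}=e\,e^{-e^t}\cdot\frac{1}{2-e^t}.
\end{equation*}
Next I would expand the two remaining factors separately: the exponential as $e^{-e^t}=\sum_{j\geq0}\frac{(-1)^j}{j!}e^{jt}$, and the rational factor as the geometric series $\frac{1}{2-e^t}=\frac12\sum_{m\geq0}\frac{e^{mt}}{2^m}$, valid for $t<\ln 2$.

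Multiplying these two expansions gives a double sum
\begin{equation*}
\mathcal{\dF}(t)=\frac{e}{2}\sum_{j\geq0}\sum_{m\geq0}\frac{(-1)^j}{j!}\frac{e^{(j+m)t}}{2^m}.
\end{equation*}
The substitution $k=j+m$ (so $m=k-j$, and for fixed $j$ the new index $k$ runs over $k\geq j$) turns this into $\frac{e}{2}\sum_{j\geq0}\sum_{k\geq j}\frac{(-1)^j}{j!}\frac{e^{kt}}{2^{k-j}}$. Finally I would insert $e^{kt}=\sum_{n\geq0}k^n\frac{t^n}{n!}$ and read off the coefficient of $t^n/n!$, which yields exactly the claimed formula for $\dF_n$.

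The only delicate point—and the step I would treat as the main obstacle—is justifying that the rearrangements (interchanging the two outer summations, performing the reindexing $k=j+m$, and interchanging the series for $e^{kt}$ with the sums over $j$ and $k$) are legitimate, since the geometric expansion of $(2-e^t)^{-1}$ converges only on $t<\ln2$. This is handled by absolute convergence: for any fixed $n$ the double sum $\sum_{j\geq0}\sum_{m\geq0}\frac{(j+m)^n}{j!\,2^m}$ converges, which can be seen from the bound $(j+m)^n\leq 2^n(j^n+m^n)$ together with the convergence of $\sum_{j\geq0} j^n/j!$ and $\sum_{m\geq0} m^n/2^m$. Hence Fubini's theorem permits all the interchanges, and the final double series converges absolutely to $\dF_n$.
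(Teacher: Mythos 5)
Your proposal is correct and follows essentially the same route as the paper: expand $e^{-(e^t-1)}/(2-e^t)$ as $\frac{e}{2}\,e^{-e^t}\cdot(1-e^t/2)^{-1}$, multiply the two series in $e^t$, reindex so the inner sum runs over $k\geq j$, and compare coefficients of $t^n/n!$. The only difference is cosmetic (the paper organizes the product as a Cauchy product over $k$ with $j=0,\dots,k$ before swapping the summation order, whereas you reindex via $k=j+m$ directly), and your explicit Fubini/absolute-convergence justification is a point the paper leaves implicit.
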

    
    \begin{proof} From Theorem \ref{dFegf} it follows that
        \begin{equation*} 
            \begin{split}
                \sum_{n \geq 0} \dF_n\frac{t^n}{n!}&=\frac{e^{-\left(e^t-1\right)}}{1-\left(e^t-1\right)}=\frac{e}{2}\frac{ e^{-e^t}}{\left(1-\frac{e^t}{2}\right)}\\
                &=\frac{e}{2}\sum_{k\geq 0} \frac{(-1)^k  e^{kt}}{k!}\sum_{k\geq 0}\left(\frac{1}{2}\right)^k e^{kt}\\
                &=\frac{e}{2}\sum_{k\geq 0} \sum_{j=0}^k  \frac{(-1)^j e^{jt}}{j!}\left(\frac{1}{2}\right)^{k-j} e^{(k-j)t}\\
                &=\frac{e}{2}\sum_{n\geq 0}\sum_{k\geq 0} \sum_{j=0}^k  \frac{(-1)^j}{j!}\left(\frac{1}{2}\right)^{k-j}k^n \frac{t^n}{n!}\\
                &=\frac{e}{2}\sum_{n\geq 0} \sum_{j\geq0}\sum_{k\geq j}  \frac{(-1)^j}{j!}\left(\frac{1}{2}\right)^{k-j}k^n \frac{t^n}{n!},\\
            \end{split}
        \end{equation*}
        by comparing the coefficient of $\frac{t^n}{n!}$ we get
        $$\dF_n=\frac{e}{2}\sum_{j\geq 0}\sum_{k\geq j}  \frac{(-1)^j}{j!}\frac{k^n}{2^{k-j}}.$$
    \end{proof}
     \begin{remark}
         Dobi\`niski's formula is suitable to the computation of $w_n$, $F_n$ and $\dF_n$ for large $n$ values as Rota mentioned in \cite{Rot}.
     \end{remark}
    \section{Higher order derivatives and convolution formulas}
    Before giving our next result, we state the following lemma and proposition.
    \begin{lemma}For any $m \geq 1$, the $m^{\text{th}}$ derivatives of $\mathcal{F}(t)$ and $\frac{1}{\mathcal{W}(t)}$ are, respectively,
        \begin{equation}\label{Fm}
            \mathcal{F}^{(m)}(t)=\sum_{k=0}^m k! {m \brace k} e^{kt} \mathcal{F}^{k+1}(t)
        \end{equation}
        and
        \begin{equation}\label{Bm}
            \left(\frac{1}{\mathcal{W}(t)}\right)^{(m)}=\sum_{k=0}^m(-1)^k {m\brace k}\frac{e^{kt}}{\mathcal{W}(t)}.
        \end{equation}
    \end{lemma}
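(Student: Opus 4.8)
The plan is to prove both identities simultaneously by induction on $m$, since they share the same structure and both reduce to the Stirling recurrence ${m+1 \brace k}={m \brace k-1}+k{m \brace k}$. Everything is driven by the two first-order identities
\begin{equation*}
\mathcal{F}'(t)=e^t\,\mathcal{F}^2(t), \qquad \left(\frac{1}{\mathcal{W}(t)}\right)'=-\,e^t\,\frac{1}{\mathcal{W}(t)},
\end{equation*}
which follow immediately from $\mathcal{F}(t)=(2-e^t)^{-1}$ and $1/\mathcal{W}(t)=e^{1-e^t}$ by the chain rule. Setting $m=1$ in \eqref{Fm} and \eqref{Bm} and using ${1\brace 0}=0$, ${1\brace 1}=1$ reproduces exactly these two identities, so the base case is immediate.

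For the inductive step in \eqref{Fm}, I would differentiate the assumed expression for $\mathcal{F}^{(m)}(t)$ term by term. The product rule applied to $e^{kt}\mathcal{F}^{k+1}(t)$ gives $k\,e^{kt}\mathcal{F}^{k+1}+(k+1)\,e^{kt}\mathcal{F}^{k}\mathcal{F}'$, and substituting $\mathcal{F}'=e^t\mathcal{F}^2$ turns the second summand into $(k+1)\,e^{(k+1)t}\mathcal{F}^{k+2}$. This splits $\mathcal{F}^{(m+1)}(t)$ into two sums; in the second one I shift $k\mapsto k-1$ so that each surviving term carries the common factor $k!\,e^{kt}\mathcal{F}^{k+1}(t)$. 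Collecting the coefficient of this factor produces $k{m\brace k}+{m\brace k-1}$, which is precisely ${m+1\brace k}$, and the formula at level $m+1$ follows. The argument for \eqref{Bm} is identical in shape: the product rule now yields $k\,e^{kt}/\mathcal{W}-e^{(k+1)t}/\mathcal{W}$, and after the index shift the leading minus sign combines with $(-1)^{k-1}$ to restore the factor $(-1)^{k}$, so the very same Stirling recurrence closes the induction.

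The only real obstacle is the bookkeeping of the boundary terms after the index shift. In each case the first sum runs over $0\le k\le m$ while the reindexed second sum runs over $1\le k\le m+1$, so I would check separately that the $k=0$ term vanishes, consistently with ${m+1\brace 0}=0$, and that the top term $k=m+1$ carries the correct coefficient: $(m+1)!\,{m\brace m}=(m+1)!\,{m+1\brace m+1}$ in \eqref{Fm}, and $(-1)^{m+1}{m\brace m}=(-1)^{m+1}{m+1\brace m+1}$ in \eqref{Bm}, both using ${m\brace m}={m+1\brace m+1}=1$. Once these edge cases are confirmed, the interior coefficients are handled uniformly by the Stirling recurrence and both inductions conclude.
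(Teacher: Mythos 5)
Your proof is correct and follows essentially the same route as the paper: induction on $m$ starting from $\mathcal{F}'=e^t\mathcal{F}^2$ and $(1/\mathcal{W})'=-e^t/\mathcal{W}$, with the product rule, an index shift $k\mapsto k-1$, and the Stirling recurrence $k{m\brace k}+{m\brace k-1}={m+1\brace k}$ closing the step. Your explicit verification of the boundary terms $k=0$ and $k=m+1$ is a point of care the paper handles only implicitly, but it does not change the substance of the argument.
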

    \begin{proof}
        The proof of lemma proceeds by induction on $m$. 
        
        For $m=1$ it is easy to check from the generating functions of Fubini numbers \eqref{Fubegf} and Bell numbers \eqref{Belegf} that 
    
        \begin{equation}\label{m1F}
            \mathcal{F}'(t)=e^t\mathcal{F}^2(t)=\sum_{k=0}^1 k! {1 \brace k} e^{kt} \mathcal{F}^{k+1}(t) 
        \end{equation} 
        and 
        
        \begin{equation}\label{m1B}
        \left(\frac{1}{\mathcal{W}(t)}\right)'= \frac{-e^t}{\mathcal{W}(t)}=\sum_{k=0}^1(-1)^k {1\brace k}\frac{e^{kt}}{\mathcal{W}(t)}.
        \end{equation}
        Then from \eqref{m1F} and \eqref{m1B} the lemma is true for $m=1$.

        Now, assume the lemma holds for a fixed $m \geq 1 $, then
        \begin{equation*}
            \mathcal{F}^{(m)}(t)=\sum_{k=0}^m k! {m \brace k} e^{kt} \mathcal{F}^{k+1}(t)
        \end{equation*}
        and
        \begin{equation*}
            \left(\frac{1}{\mathcal{W}(t)}\right)^{(m)}=\sum_{k=0}^m(-1)^k {m\brace k}\frac{e^{kt}}{\mathcal{W}(t)}.
        \end{equation*}
        Now, we prove the statement for $m+1$. Thus,
        \begin{equation*}
            \begin{split}
                \mathcal{F}^{(m+1)}(t)&=\left(\sum_{k=0}^m k! {m \brace k} e^{kt} \mathcal{F}^{k+1}(t)\right)'\\
                                        &=\sum_{k=0}^m k! {m \brace k} \left[ke^{kt} \mathcal{F}^{k+1}(t)+(k+1)e^{kt} \mathcal{F}^{k}(t) \mathcal{F}'(t)\right]\\
                                        &=\sum_{k=0}^m k! {m \brace k} k e^{kt} \mathcal{F}^{k+1}(t)+\sum_{k=0}^m (k+1)! {m \brace k}e^{(k+1)t} \mathcal{F}^{k+2}(t)\\
                                        &=\sum_{k=0}^{m+1} k! {m \brace k} k e^{kt} \mathcal{F}^{k+1}(t)+\sum_{k=0}^{m+1} k! {m \brace k-1}e^{kt} \mathcal{F}^{k+1}(t)\\
                                        &=\sum_{k=0}^{m+1} k! \left(k{m \brace k}+{m \brace k-1} \right)e^{kt} \mathcal{F}^{k+1}(t)\\      
                                        &=\sum_{k=0}^{m+1} k! {m+1 \brace k} e^{kt} \mathcal{F}^{k+1}(t)
            \end{split}
        \end{equation*}
        and 
        \begin{equation*}
            \begin{split}
                \left(\frac{1}{\mathcal{W}(t)}\right)^{(m)}&=\left(\sum_{k=0}^m(-1)^k {m\brace k}\frac{e^{kt}}{\mathcal{W}(t)}\right)'\\
                                        &=\sum_{k=0}^m (-1)^k {m\brace k} \left[\frac{e^{kt}(k\mathcal{W}(t)-\mathcal{W}'(t))}{\mathcal{W}^2(t)}\right]\\
                                        &=\sum_{k=0}^m (-1)^k {m \brace k} k \frac{e^{kt}}{\mathcal{W}(t)}+\sum_{k=0}^m (-1)^{k+1} {m \brace k}\frac{e^{(k+1)t}}{\mathcal{W}(t)}\\
                                        &=\sum_{k=0}^{m+1} (-1)^k {m \brace k} k \frac{e^{kt}}{\mathcal{W}(t)}+\sum_{k=0}^{m+1} (-1)^{k} {m \brace k-1}\frac{e^{kt}}{\mathcal{W}(t)}\\
                                        &=\sum_{k=0}^{m+1} (-1)^k \left(k{m \brace k}+{m \brace k-1} \right)\frac{e^{kt}}{\mathcal{W}(t)}\\      
                                        &=\sum_{k=0}^{m+1} (-1)^k {m+1 \brace k}\frac{e^{kt}}{\mathcal{W}(t)}.
            \end{split}
        \end{equation*}
        Therefore, the assumption holds true for $m+1$, which complete the proof.
    \end{proof}
    We can now formulate the higher order derivative for $\mathcal{\dF}(t)$. First of all, we define the so-called $i^\text{th}$ falling factorial of $j$ by
    \begin{equation*}
	j^{\underline{i}}=\begin{cases}
			i(i-1)(i-2)\cdots(i-j+1),& \text{ if } i\geq0;\\
			1, & \text{ if } i=0. \end{cases}
	\end{equation*}
    \begin{theorem} For any $m \geq 1$ we have
        \begin{equation}\label{dFmder}
            \mathcal{\dF}^{(m)}(t)=\mathcal{\dF}(t)\sum_{k=0}^m {m \choose k}\sum_{i=0}^{k}\sum_{j=0}^{m-k}(-1)^{j}i!{k \brace i}{m-k \brace j}e^{(j+i)t}\mathcal{F}^{i}(t)
        \end{equation}
        or equivalently 
        \begin{equation}\label{dFmder2}
            \mathcal{\dF}^{(m)}(t)=\mathcal{\dF}(t)\sum_{i=0}^m\sum_{j=i}^{m}(-1)^{i+j}e^{jt}j^{\underline{i}}{m \brace j}\mathcal{F}^{i}(t),
        \end{equation}   
        where $\mathcal{\dF}^{(m)}(t)$ is the $m^{\text{th}}$ derivative of $\mathcal{\dF}(t)$.
    \end{theorem}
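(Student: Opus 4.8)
The plan is to exploit the factorization
\begin{equation*}
\mathcal{\dF}(t)=\mathcal{F}(t)\cdot\frac{1}{\mathcal{W}(t)},
\end{equation*}
which is immediate from Theorem~\ref{dFegf} together with $\mathcal{F}(t)=\frac{1}{2-e^t}$ and $\mathcal{W}(t)=e^{e^t-1}$, and to differentiate this product $m$ times by the general Leibniz rule. Since the Lemma already supplies closed forms for both $\mathcal{F}^{(k)}$ and $\bigl(1/\mathcal{W}\bigr)^{(m-k)}$, the substitution produces a triple sum directly. First I would write
\begin{equation*}
\mathcal{\dF}^{(m)}(t)=\sum_{k=0}^{m}\binom{m}{k}\mathcal{F}^{(k)}(t)\left(\frac{1}{\mathcal{W}(t)}\right)^{(m-k)},
\end{equation*}
and insert $\mathcal{F}^{(k)}(t)=\sum_{i=0}^{k}i!\,{k\brace i}e^{it}\mathcal{F}^{i+1}(t)$ from \eqref{Fm} and $\bigl(1/\mathcal{W}(t)\bigr)^{(m-k)}=\sum_{j=0}^{m-k}(-1)^{j}{m-k\brace j}\,e^{jt}/\mathcal{W}(t)$ from \eqref{Bm}.

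The decisive simplification is the relation $1/\mathcal{W}(t)=\mathcal{\dF}(t)/\mathcal{F}(t)$: each product term then carries the factor $\mathcal{F}^{i+1}(t)\cdot\mathcal{F}(t)^{-1}=\mathcal{F}^{i}(t)$ together with an overall factor $\mathcal{\dF}(t)$, while the exponentials merge as $e^{it}e^{jt}=e^{(i+j)t}$. Pulling $\mathcal{\dF}(t)$ out of the sum yields precisely \eqref{dFmder}. I expect this half to be routine bookkeeping once the substitution and the identity $1/\mathcal{W}=\mathcal{\dF}/\mathcal{F}$ are in place; it avoids any induction and reuses the Lemma wholesale.

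To pass to the equivalent form \eqref{dFmder2} I would regroup the terms of \eqref{dFmder} by the power of $\mathcal{F}$ and by the total exponent of $e$. Fixing the power $i$ of $\mathcal{F}(t)$ and introducing $j:=i+(\text{old }j)$, so that $i\le j\le m$, the coefficient of $e^{jt}\mathcal{F}^{i}(t)$ becomes $(-1)^{j-i}i!\sum_{k}\binom{m}{k}{k\brace i}{m-k\brace j-i}$. The heart of the argument, and the step I expect to be the main obstacle, is the Stirling convolution identity
\begin{equation*}
\sum_{k}\binom{m}{k}{k\brace i}{m-k\brace j-i}=\binom{j}{i}{m\brace j},
\end{equation*}
which I would prove in one line by comparing the coefficient of $t^m/m!$ on both sides of $\frac{(e^t-1)^i}{i!}\cdot\frac{(e^t-1)^{j-i}}{(j-i)!}=\frac{(e^t-1)^j}{i!\,(j-i)!}$. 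Combined with $i!\binom{j}{i}=j^{\underline{i}}$ and $(-1)^{j-i}=(-1)^{i+j}$, this rewrites the coefficient as $(-1)^{i+j}j^{\underline{i}}{m\brace j}$ and collapses the triple sum to the double sum \eqref{dFmder2}. As a consistency check I would verify both expressions against the first-order relation $\mathcal{\dF}'(t)=e^{t}\bigl(\mathcal{F}(t)-1\bigr)\mathcal{\dF}(t)$, obtained by taking the logarithmic derivative of $\mathcal{\dF}(t)$ and using $\mathcal{F}'(t)=e^{t}\mathcal{F}^{2}(t)$; this recovers the case $m=1$ of both \eqref{dFmder} and \eqref{dFmder2}.
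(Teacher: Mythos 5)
Your proof is correct and takes essentially the same route as the paper: factor $\mathcal{\dF}(t)=\mathcal{F}(t)\cdot\frac{1}{\mathcal{W}(t)}$, apply the Leibniz rule, substitute the Lemma's formulas \eqref{Fm} and \eqref{Bm} (with the simplification $\mathcal{F}^{i+1}/\mathcal{F}=\mathcal{F}^{i}$ pulling out $\mathcal{\dF}(t)$), then regroup by the total exponent of $e^t$ and collapse the $k$-sum via the identity $\sum_{k}\binom{m}{k}{k\brace i}{m-k\brace j-i}=\binom{j}{i}{m\brace j}$. The only difference is that the paper cites this Stirling convolution identity from Quaintance--Gould, whereas you prove it in one line by comparing coefficients in $\frac{(e^t-1)^i}{i!}\cdot\frac{(e^t-1)^{j-i}}{(j-i)!}=\binom{j}{i}\frac{(e^t-1)^j}{j!}$, which is a correct and self-contained touch.
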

    
    \begin{proof}
        From the generating function (Theorem \ref{dFegf}), it is easy to observe that
        \begin{equation*}
            \mathcal{\dF}(t)=\frac{\mathcal{F}(t)}{\mathcal{W}(t)}.
        \end{equation*} 
    According to Leibniz's formula (see section $5.11$, exercise $4$ in \cite{Ap}) 
    $$ (f(t)g(t))^{(n)}=\sum_{k=0}^n {n \choose k}f^{(k)}(t)g^{(n-k)}(t)$$
    we have
    \begin{equation*}
    	\mathcal{\dF}^{(m)}(t)=\left(\frac{\mathcal{F}(t)}{\mathcal{W}(t)}\right)^{(m)}=\sum_{k=0}^m {m \choose k}\mathcal{F}^{(k)}(t)\left(\frac{1}{\mathcal{W}(t)}\right)^{(m-k)}.
    \end{equation*}
  And from the precedent Lemma (equations \eqref{Fm} and \eqref{Bm}), we get the identity \eqref{dFmder}.

        For the equivalent identity \eqref{dFmder2}, we use the equation \cite[page 120]{Qu}
        \begin{equation*}
            \sum_{k=i}^m {m \choose k}{k \brace i}{m-k \brace j-i}={j \choose i}{m \brace j},
        \end{equation*}
        a simple calculation gives the result.
    \end{proof}
    
    Let us give an important consequence of the preceding theorem.
    \begin{corollary}\label{dFBinConv} For any $ n \geq 0$, the $n^{\text{th}}$ deranged Bell number satisfies the following binomial convolution
        \begin{equation*}
            \dF_{n+1}=\sum_{i=0}^{n}\sum_{j=0}^{i-1} {n \choose i}{i \choose j}\dF_{j}F_{i-j}.
        \end{equation*}
    \end{corollary}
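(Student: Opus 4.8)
The plan is to reduce the whole statement to a single first-order differential relation for $\mathcal{\dF}(t)$ and then read off the coefficient of $t^n/n!$. Specializing the preceding theorem at $m=1$ (equivalently, differentiating the factorization $\mathcal{\dF}(t)=\mathcal{F}(t)/\mathcal{W}(t)$ by the product rule and inserting the $m=1$ cases of \eqref{Fm} and \eqref{Bm}, namely $\mathcal{F}'(t)=e^t\mathcal{F}^2(t)$ and $\bigl(1/\mathcal{W}(t)\bigr)'=-e^t/\mathcal{W}(t)$) one obtains
\begin{equation*}
\mathcal{\dF}'(t)=e^t\,\mathcal{\dF}(t)\bigl(\mathcal{F}(t)-1\bigr)=e^t\,\mathcal{\dF}(t)\mathcal{F}(t)-e^t\,\mathcal{\dF}(t).
\end{equation*}
This is the only analytic input; everything afterwards is coefficient bookkeeping.

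Next I would compare the coefficients of $t^n/n!$ on both sides, exactly as in the proof of the Dobi\`nski-type formula. On the left this coefficient is $\dF_{n+1}$. On the right I use that a product of exponential generating functions corresponds to a binomial convolution of their coefficient sequences, together with the fact that $e^t=\sum_{n\geq0}t^n/n!$ has all of its coefficients equal to $1$. Convolving $\mathcal{\dF}$ with $\mathcal{F}$ first and then with $e^t$ (and convolving $e^t$ with $\mathcal{\dF}$ in the second term) gives
\begin{equation*}
\dF_{n+1}=\sum_{i=0}^n\binom{n}{i}\sum_{j=0}^i\binom{i}{j}\dF_j F_{i-j}-\sum_{i=0}^n\binom{n}{i}\dF_i.
\end{equation*}

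Finally I would isolate the diagonal term of the inner convolution. Its summand at $j=i$ equals $\binom{i}{i}\dF_i F_0=\dF_i$, since $F_0=1$; splitting it off turns $\sum_{j=0}^i$ into $\sum_{j=0}^{i-1}$ plus $\dF_i$, and the resulting $\sum_{i=0}^n\binom{n}{i}\dF_i$ cancels exactly against the contribution of the $-e^t\mathcal{\dF}$ term. What survives is precisely
\begin{equation*}
\dF_{n+1}=\sum_{i=0}^{n}\sum_{j=0}^{i-1}\binom{n}{i}\binom{i}{j}\dF_j F_{i-j},
\end{equation*}
which is the claim. I expect no genuine obstacle here: the one subtle point is recognizing that the truncation of the inner sum at $i-1$ is exactly the footprint left by cancelling $-e^t\mathcal{\dF}(t)$, so the handling of the diagonal $j=i$ term is the step to carry out with care.
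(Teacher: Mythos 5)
Your proposal is correct and follows essentially the same route as the paper: both start from the $m=1$ relation $\mathcal{\dF}'(t)=e^t\mathcal{\dF}(t)\bigl(\mathcal{F}(t)-1\bigr)$ and extract the coefficient of $t^n/n!$. The only (immaterial) difference is bookkeeping order: the paper cancels the diagonal $j=n$ term inside $\mathcal{\dF}(t)\mathcal{F}(t)-\mathcal{\dF}(t)$ before multiplying by $e^t$, whereas you distribute $e^t$ first and cancel at the end.
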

    \begin{proof}
        To deduce the result from \eqref{dFmder} for $m=1$ we have 
        \begin{equation*}
            \begin{split}
                \mathcal{\dF}'(t)&=e^t\mathcal{\dF}(t)\left(\mathcal{F}(t)-1\right)\\
                &=e^t\left(\sum_{ n\geq 0}\dF_n\frac{t^n}{n!}\sum_{ n\geq 0}F_n\frac{t^n}{n!}-\sum_{ n\geq 0}\dF_n\frac{t^n}{n!}\right)\\
                &=e^t\sum_{ n\geq 0}\left(\sum_{j=0}^{n-1}{n \choose j}\dF_{j} F_{n-j}\right)\frac{t^n}{n!}\\
                &=\sum_{n \geq 0}\sum_{i=0}^n\sum_{j=0}^{i-1}{n\choose i}{i\choose j} \dF_{j} F_{i-j}\frac{t^n}{n!}.
            \end{split}
        \end{equation*}   
        In another hand, it is easy to check that 
        $$\mathcal{\dF}'(t)=\sum_{n \geq 0} \dF_{n+1}\frac{t^n}{n!}.$$
        Therefore, the corollary holds true.
    \end{proof}
    As a more general result we have
    \begin{corollary}For any $ n \geq 0$, the $n^{\text{th}}$ deranged Bell number satisfies the following multinomial convolution
    \begin{equation*}
        \dF_{n+m}=\sum_{i=0}^{m}\sum_{j=i}^{m}\sum_{k_1+k_2+\cdots+k_{i+2}=n} {n \choose {k_1,k_2,\dots,k_{i+2}}}(-1)^{i+j}j^{\underline{i}}{m \brace j}j^{k_1}\dF_{k_2}\prod_{s=3}^{i+2}F_{k_s}.
    \end{equation*}
    \end{corollary}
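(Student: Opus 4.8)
The main idea is to extract the coefficient of $t^n/n!$ from the closed form \eqref{dFmder2} for $\mathcal{\dF}^{(m)}(t)$, mirroring the $m=1$ computation in Corollary~\ref{dFBinConv}. First I would invoke the standard fact that the $m$-th derivative of an exponential generating function shifts its index by $m$, so that
$$\mathcal{\dF}^{(m)}(t)=\sum_{n\geq 0}\dF_{n+m}\frac{t^n}{n!};$$
thus $\dF_{n+m}$ is precisely the coefficient of $t^n/n!$ on the left-hand side of \eqref{dFmder2}.

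Next I would read the right-hand side of \eqref{dFmder2} as a finite combination, with scalar weights $(-1)^{i+j}j^{\underline{i}}{m\brace j}$, of products $e^{jt}\,\mathcal{\dF}(t)\,\mathcal{F}^{i}(t)$. Each such product is a product of $i+2$ exponential generating functions: one factor $e^{jt}$, one factor $\mathcal{\dF}(t)$, and $i$ factors $\mathcal{F}(t)$. Since $e^{jt}$ has coefficients $j^{k_1}$, since $\mathcal{\dF}(t)$ has coefficients $\dF_{k_2}$, and since each factor $\mathcal{F}(t)$ contributes $F_{k_s}$, the multinomial convolution for a product of exponential generating functions yields, for the coefficient of $t^n/n!$ in $e^{jt}\mathcal{\dF}(t)\mathcal{F}^{i}(t)$,
$$\sum_{k_1+k_2+\cdots+k_{i+2}=n}{n \choose {k_1,k_2,\dots,k_{i+2}}} j^{k_1}\dF_{k_2}\prod_{s=3}^{i+2}F_{k_s}.$$
Multiplying by the weight $(-1)^{i+j}j^{\underline{i}}{m\brace j}$ and summing over the ranges $0\le i\le m$ and $i\le j\le m$ gives exactly the right-hand side of the claimed identity; comparing with the coefficient $\dF_{n+m}$ obtained in the first step completes the argument.

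The only delicate point is the bookkeeping in the second step: one must unfold the power $\mathcal{F}^{i}(t)$ into $i$ genuinely separate factors, so that the multinomial convolution over the $i+2$ parts $k_1,\dots,k_{i+2}$ produces precisely one $j^{k_1}$, one $\dF_{k_2}$, and the product $\prod_{s=3}^{i+2}F_{k_s}$ of exactly $i$ Fubini numbers, with the empty-product convention $\prod_{s=3}^{2}F_{k_s}=1$ absorbing the $i=0$ contributions. No new idea beyond this indexing is needed, since for $m=1$ the same manipulation reduces \eqref{dFmder2} to the identity $\mathcal{\dF}'(t)=e^t\mathcal{\dF}(t)\left(\mathcal{F}(t)-1\right)$ that drives Corollary~\ref{dFBinConv}.
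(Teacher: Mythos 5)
Your proposal is correct and is essentially the paper's own argument: the paper's proof simply invokes the generalized Cauchy product rule on identity \eqref{dFmder2} and compares coefficients of $t^n/n!$, which is exactly the coefficient extraction you carry out (with the index shift $\mathcal{\dF}^{(m)}(t)=\sum_{n\geq 0}\dF_{n+m}t^n/n!$ and the unfolding of $e^{jt}\mathcal{\dF}(t)\mathcal{F}^{i}(t)$ into $i+2$ factors). Your explicit bookkeeping, including the empty-product convention for $i=0$ and the $m=1$ consistency check, just fills in details the paper leaves implicit.
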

    \begin{proof}
    By applying generalized Cauchy product rule on identity \eqref{dFmder2} and comparing the coefficients of $\frac{t^n}{n!}$ we get the convolution. 
    \end{proof}
     \begin{corollary}For all $n \geq 1$ we have
        \begin{equation*}
            \sum_{j=1}^n {n \choose j}\dF_{n-j}F_j=\sum_{j=0}^n(-1)^{n-j}{n \choose j}\dF_{j+1}.
        \end{equation*}
     \end{corollary}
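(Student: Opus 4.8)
The plan is to show that both sides of the identity are the coefficient of $t^n/n!$ in one and the same exponential generating function, and then to read off the equality of coefficients. The unifying function will be $\mathcal{\dF}(t)\bigl(\mathcal{F}(t)-1\bigr)$.

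First I would rewrite the left-hand side. Because the ordinary product of two exponential generating functions realizes the binomial convolution of their coefficient sequences, the coefficient of $t^n/n!$ in $\mathcal{\dF}(t)\mathcal{F}(t)$ is $\sum_{j=0}^{n}\binom{n}{j}\dF_{n-j}F_j$. The left-hand sum starts at $j=1$, so it differs from this full convolution only by the $j=0$ term $\binom{n}{0}\dF_n F_0=\dF_n$ (using $F_0=1$). Hence the left-hand side equals the coefficient of $t^n/n!$ in $\mathcal{\dF}(t)\mathcal{F}(t)-\mathcal{\dF}(t)=\mathcal{\dF}(t)\bigl(\mathcal{F}(t)-1\bigr)$.

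Next I would rewrite the right-hand side. Multiplying a generating function $A(t)=\sum_{j\ge0}a_j\,t^j/j!$ by $e^{-t}=\sum_{k\ge0}(-1)^k t^k/k!$ produces the alternating binomial transform, so that the coefficient of $t^n/n!$ in $e^{-t}A(t)$ is $\sum_{j=0}^{n}(-1)^{n-j}\binom{n}{j}a_j$. Taking $a_j=\dF_{j+1}$, whose generating function is precisely $\mathcal{\dF}'(t)=\sum_{j\ge0}\dF_{j+1}\,t^j/j!$, identifies the right-hand side with the coefficient of $t^n/n!$ in $e^{-t}\mathcal{\dF}'(t)$.

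It then remains to establish the functional identity $\mathcal{\dF}(t)\bigl(\mathcal{F}(t)-1\bigr)=e^{-t}\mathcal{\dF}'(t)$, and this is immediate: the proof of Corollary~\ref{dFBinConv} (equivalently, the $m=1$ case of \eqref{dFmder}) gives $\mathcal{\dF}'(t)=e^{t}\mathcal{\dF}(t)\bigl(\mathcal{F}(t)-1\bigr)$, and multiplying through by $e^{-t}$ cancels the factor $e^t$. Comparing the coefficients of $t^n/n!$ on the two sides then yields the claim. I do not anticipate a real obstacle; the only points needing a little care are the bookkeeping of the summation ranges (the removal of the $j=0$ term on the left) and the remark that the resulting identity in fact holds for every $n\ge0$, both sides vanishing at $n=0$, which is consistent with the stated range $n\ge1$.
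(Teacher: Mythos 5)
Your proposal is correct, and every step checks: the left-hand side is indeed the coefficient of $t^n/n!$ in $\mathcal{\dF}(t)\left(\mathcal{F}(t)-1\right)$ (the full binomial convolution minus its $j=0$ term ${n \choose 0}\dF_n F_0=\dF_n$), the right-hand side is the coefficient of $t^n/n!$ in $e^{-t}\mathcal{\dF}'(t)$, and the bridging identity $\mathcal{\dF}'(t)=e^t\mathcal{\dF}(t)\left(\mathcal{F}(t)-1\right)$ is exactly what the paper establishes inside the proof of Corollary \ref{dFBinConv}. The paper's own proof is different in mechanics though not in substance: it applies the binomial inversion formula (citing \cite{Aig}) directly to the sequence identity of Corollary \ref{dFBinConv}; after the reindexing $j\mapsto i-j$, the inner sum $\sum_{j=0}^{i-1}{i \choose j}\dF_j F_{i-j}$ appearing there is precisely the present left-hand side, so inversion yields the right-hand side in one line. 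Since multiplying an exponential generating function by $e^{-t}$ is precisely the inverse binomial transform, your multiplication by $e^{-t}$ carries out by hand the very inversion the paper invokes by citation. What your route buys is self-containedness --- you never need the inversion theorem, only the $m=1$ derivative identity and coefficient extraction --- and it quietly sidesteps the reindexing step, because the binomial convolution is symmetric in its two sequences. What the paper's route buys is brevity, and it makes explicit that the two corollaries form a binomial inverse pair. Your closing observation that the identity also holds at $n=0$, both sides vanishing, is likewise correct.
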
 
     \begin{proof}The result holds true by applying the well-known binomial inversion formula (see for example \cite[Corollary 3.38, p. 96]{Aig}) on Corollary \ref{dFBinConv}.
    \end{proof}

    \section{\texorpdfstring{Asymptotic behavior $\dF_n$}{Asymptotic behavior}}
    
    In this section, we are interested to obtaining the asymptotic behavior the deranged Bell numbers $\dF_n$.
    
    Finding an asymptotic behavior of a sequence $(a_n)_{n\geq 0}$ means to find a second sequence $b_n$ simple than $a_n$ which gives a good approximation of its values when $n$ is large.

    We will use the classical singularity analysis technic (see for instance \cite{Fla} and Chapter 5 of \cite{Wil}) to deduce the asymptotic behavior a sequence $a_n$ using the singularities of its generating function $\mathcal{A}(t)$.
    
    \begin{theorem} The asymptotic behavior $\dF_n$ is
        \begin{equation*}
            \frac{\dF_n}{n!} \sim \frac{1}{2 e \log^{n+1} (2)}+ O\left(6.3213)^{-n}\right), \qquad n \longrightarrow\infty .
        \end{equation*}
    \end{theorem}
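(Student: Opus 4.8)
The plan is to apply the singularity analysis of meromorphic generating functions to $\mathcal{\dF}(t)=e^{-(e^t-1)}/(2-e^t)$ from Theorem \ref{dFegf}, reading off $\dF_n/n! = [t^n]\,\mathcal{\dF}(t)$ as the Taylor coefficients of a meromorphic function. The first step is to locate the singularities. Since the numerator $e^{-(e^t-1)}$ is entire, every singularity of $\mathcal{\dF}(t)$ is a simple pole coming from a zero of $2-e^t$, that is, a point $t_k=\log 2 + 2k\pi i$ with $k\in\mathbb{Z}$. Their moduli are $|t_k|=\sqrt{\log^2 2 + 4k^2\pi^2}$, which strictly increases with $|k|$, so the unique dominant singularity is $\rho=t_0=\log 2$ and the next-closest are the conjugate pair $t_{\pm 1}=\log 2 \pm 2\pi i$ of modulus $\sqrt{\log^2 2 + 4\pi^2}\approx 6.3213$.

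Next I would extract the contribution of the dominant pole. Because $\rho=\log 2$ is a simple zero of $h(t)=2-e^t$ with $h'(\log 2)=-2$, the residue of $\mathcal{\dF}$ there is $e^{-(e^{\log 2}-1)}/h'(\log 2)=e^{-1}/(-2)=-1/(2e)$. Writing $\mathcal{\dF}(t)=\dfrac{-1/(2e)}{t-\log 2}+R(t)$, the subtracted principal part expands as a geometric series, giving $[t^n]\dfrac{-1/(2e)}{t-\log 2}=\dfrac{1}{2e\,\log^{n+1}2}$, which is exactly the announced leading term.

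It then remains to bound $[t^n]R(t)$. By construction $R$ is analytic on the disk $|t|<\sqrt{\log^2 2 + 4\pi^2}$, its nearest possible singularities being the poles $t_{\pm 1}$, so Cauchy's inequality on a circle of radius $6.3213-\varepsilon$ yields $[t^n]R(t)=O\big((6.3213-\varepsilon)^{-n}\big)$ for every $\varepsilon>0$. To reach the sharp order $O(6.3213^{-n})$ stated in the theorem, I would instead also subtract the principal parts at the conjugate pair $t_{\pm 1}$: these two simple poles contribute a term of exact order $|t_1|^{-n}=6.3213^{-n}$ (a bounded, oscillating multiple), after which the remainder is analytic on a strictly larger disk and hence of smaller exponential order.

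The main obstacle is precisely this error analysis rather than the leading term. One must confirm that no zero of $2-e^t$ lies in the annulus $\log 2<|t|<6.3213$ (immediate from monotonicity of $|t_k|$ in $|k|$) and then justify the uniform coefficient estimate. The cleanest route is to invoke the transfer theorem for meromorphic functions (see \cite{Fla,Wil}), which legitimizes equating the coefficient asymptotics of $\mathcal{\dF}$ with the sum of the coefficient contributions of its finitely many poles inside a disk of radius just beyond $6.3213$, the pole at $\log 2$ supplying the main term and the conjugate pair $t_{\pm 1}$ supplying the $O(6.3213^{-n})$ error.
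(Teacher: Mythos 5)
Your proposal is correct and follows essentially the same route as the paper: singularity analysis of the meromorphic function $\mathcal{\dF}(t)=e^{-(e^t-1)}/(2-e^t)$, with the simple pole at $\log 2$ (residue $-1/(2e)$) supplying the main term $1/(2e\log^{n+1}2)$ and the conjugate pair $\log 2\pm 2\pi i$ of modulus $\approx 6.3213$ supplying the error. If anything, your error analysis---subtracting the principal parts at $t_{\pm 1}$ and applying Cauchy's estimate on a larger disk---is more careful than the paper's, which simply asserts the $O(\rho^{-n})$ term and even miswrites $\rho$ as $\sqrt{\log^2(2)+(2i\pi)^2}$, a typo you implicitly correct to $\sqrt{\log^2(2)+4\pi^2}$.
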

    \begin{proof}
    We can summarize the singularity analysis technic in the following steps:
        \begin{itemize}
            \item Compute the singularities of $\mathcal{A}(t)$.
            \item Compute the dominant singularity $\chi_0$ (singularity of smallest modulus).
            \item Compute the residue of $A(t)$ at $\chi_0$ $$Res(A(t);t=\chi_0)=\lim_{t \to \chi_0}(t - \chi_0)\mathcal{A}(t).$$
            \item The generating function $\mathcal{A}(t)$ satisfies 
            $$\mathcal{A}(t) \sim \mathcal{W}(t)=\frac{Res(A(t);t=\chi_0)}{(t-\chi_0)}.$$ 
            \item By comparing Taylor series coefficients of 
            $$\mathcal{A}(t)=\sum_{n\geq 0}a_n\frac{t^n}{n!}\quad \text{and}\quad \mathcal{C}(t)=\sum_{n\geq 0}c_n\frac{t^n}{n!}.$$ 
            We get the asymptotic behavior $a_n$ when $n$ is big enough given by 
             $$a_n \sim c_n+\mathcal{O}(\rho^{-n}),\qquad n \longrightarrow\infty,$$
             where $\rho$ is the modulus of the next-smallest modulus singularity.
           \end{itemize}
    Now, applying the previous steps on the generating function $\mathcal{\dF}(t)=\frac{e^{-(e^t-1)}}{2-e^t}$, the singularities of $\mathcal{\dF}(t)$ are $\chi_k=log(2)+2ki\pi$.

    The dominant singularity is $\chi_0=log(2)$ and the residue at this point is 
    $$Res(\mathcal{\dF}(t),t=\chi_0)=\lim_{t \to \chi_0}(t-\chi_0)\mathcal{\dF}(t)=-\frac{1}{2 e}.$$
    Thus 
    $$\mathcal{\dF}(t)\sim \frac{1}{2 e (\log (2)-t)}=\frac{1}{2 e }\sum_{n\geq 0}\frac{t^n}{\log^{n+1} (2)}.$$
    Therefore the asymptotic behavior $\dF_n$ is
    $$\frac{\dF_n}{n!} \sim \frac{1}{2 e \log^{n+1} (2)} +\mathcal{O}(\rho^{-n}),\qquad n \longrightarrow\infty,$$
    where $\rho=\sqrt{\log^2(2)+(2i\pi)^2}\simeq6.3213.$
\end{proof}
\section{The ${r}$-deranged Bell number}
The $r$-version of special numbers is a common natural extension in enumerative combinatorics, see, for example, the $r$-Stirling numbers \cite{Bro}, $r$-Bell numbers \cite{Mez}, $r$-Fubini numbers \cite{Cai}, $r$-derangement numbers \cite{Wa}.

\bigskip
The motivation of this section came from two earlier researches:
\begin{itemize}
    \item The $r$-Stirling numbers introduced by Border \cite{Bro}. The $r$-Stirling numbers of the second kind, denoted ${n \brace k}_r$, count the number of partitions $\pi$ of the set $[n]$ having exactly 
    $k$ blocks such that the $r$ first elements $1,2,\dots,r$ must be in distinct blocks.

    The $r$-Stirling numbers of the second kind kind have the following generating function \cite{Bro}
    $$\sum_{n \geq 0 }{ n+r \brace k+r}_r \frac{t^n}{n!}=\frac{e^{rt}(e^z-1)^k}{k!},$$
    
     and their explicit formula \cite{Mez} is
    $${ n+r \brace k+r}_r =\frac{1}{k!}\sum_{j=0}^k (-1)^{k-j}{k \choose j}(j+r)^n.$$
    
    \item The $r$-derangement numbers introduced by Wang et al \cite{Wa}. The $r$-derangement numbers, denoted $d_{n,r}$, count the number of permutations of the set $[n+r]$ having no fixed points such that the $r$ first elements $1,2,\dots,r$ must be in distinct cycles. 
    The exponential generating function for $r$-derangement numbers is
    $$\sum_{n \geq 0 }d_{n,r} \frac{t^n}{n!}=\frac{t^{r}e^{-t}}{(1-t)^{r+1}},$$
    and their explicit formula is 
    $$d_{n,r}=\sum_{i=r}^n{i \choose r}\frac{n!}{(n-i)!}(-1)^{n-i}, n\geq r.$$
\end{itemize}
Now, it's natural to define the $r$-deranged Bell numbers as

\begin{definition}
    An \textit{$r$-deranged partition} $\tilde{\Psi}$ of the set $[n+r]$ is an $r$-derangement $\tilde{\sigma}$ of the set of partitions $B_1|B_2|\dots|B_r|B_{r+1}|\dots|B_{k+r}$, i.e.,
     $$\tilde{\Psi}([n+r])=B_{\tilde{\sigma}(1)}|B_{\tilde{\sigma}(2)}|\dots|B_{\tilde{\sigma}(r)}|B_{\tilde{\sigma}(r+1)}|\dots|B_{\tilde{\sigma}(k+r)}$$
    such that $B_{\tilde{\sigma}(i)} \neq B_{i}$ for all $(1 \leq i \leq k+r)$.
\end{definition}
\begin{definition}
The \textit{$r$-deranged Bell numbers}, denoted $\dF_{n,r}$, count the total number of the deranged partitions of the set $[n+r]$.

It's clear that, for all positive integers, $n$, $k$ and $r$ with $( r \leq k \leq n)$ , we have
\begin{equation*}\label{dFnr}
    \dF_{n,r}=\sum_{k=0}^{n}d_{k,r}{n+r \brace k+r }.
\end{equation*}
\end{definition}

\setcounter{subsection}{0}
\subsection{Main properties of the $r$-deranged Bell numbers}
Let us give briefly the main properties of the $r$-deranged Bell numbers. The proofs are similar to the proves of previous results, so we  leave the verifications to the readers.

\begin{itemize}
    \item For all positive integers $n$, $k$ and $r$, the exponential generating function of $\dF_{n,r}$ is 
    \begin{equation*}
        \sum_{n \geq 0}\dF_{n,r}\frac{t^n}{n!}=\frac{\left(e^{t}(e^t-1)\right)^r e^{-\left(e^t-1\right)}}{(2-e^t)^{r+1}}.
    \end{equation*}  
   \item For all positive integers $n$, $k$ and $r$, the $r$-deranged Bell numbers satisfy 
   \begin{equation*}
    \dF_{n,r}=\sum_{k=0}^n\sum_{i=r}^k\sum_{j=0}^k {i \choose r}{k \choose j}\frac{(-1)^{k+i-j}}{i!}(j+r)^n,\quad n \geq r.
   \end{equation*}
   \item The $r$-deranged Bell numbers have the following Dobi\`nski-like formula
    \begin{equation*}
        \dF_{n,r}=\frac{e}{2^{r+1}}\sum_{j \geq 0}\sum_{k \geq j}\sum_{i=0}^r\frac{(-1)^{k+i-j}}{2^j(k-j)!}{r\choose i}{j+r\choose j}(2r+k-i)^n.
    \end{equation*}
\end{itemize}
Here are the first few $r$-deranged Bell numbers.
\begin{table}[H]
  \resizebox*{1\textwidth}{!}{$$$
\begin{array}{|c|cccccccc|}
 \hline
 &n=0&n=1&n=2&n=3&n=4&n=5&n=6&n=7\\
 \hline
 r=0&1 & 0 & 1 & 5 & 28 & 199 & 1721 & 17394 \\
 r=1&1 & 5 & 28 & 199 & 1721 & 17394 & 200803 & 2607301 \\
 r=2&2 & 30 & 362 & 4390 & 56912 & 801668 & 12289342 & 204429498 \\
 r=3&6 & 180 & 3810 & 72960 & 1377936 & 26643204 & 536553870 & 11341749600 \\
 r=4&24 & 1200 & 39360 & 1099560 & 28812504 & 741799296 & 19236973920 & 509589280200 \\
 r=5&120 & 9000 & 422520 & 16237200 & 565687080 & 18805154760 & 614116782840 & 20053080534960 \\
 r=6&720 & 75600 & 4808160 & 243341280 & 10892100240 & 455188401360 & 18332566132320 & 725927285809440 \\
 \hline
\end{array}
$$$}
\end{table}

\vspace{2cc}


\bibliographystyle{amsplain}

    \vspace{1cc}

{\small
\noindent
{\bf Hacène Belbachir}\\
USTHB, Faculty of Mathematics,\\
RECITS Laboratory,\\
BP 32, El Alia, 16111,\\
Bab Ezzouar, Algiers, Algeria.\\
E-mail: hacenebelbachir@gmail.com or hbelbachir@usthb.dz\\

\noindent
{\bf Yahia Djemmada}\\
USTHB, Faculty of Mathematics,\\
RECITS Laboratory,\\
BP 32, El Alia, 16111,\\
Bab Ezzouar, Algiers, Algeria.\\
E-mail: yahia.djem@gmail.com or ydjemmada@usthb.dz\\

\noindent
{\bf L\'aszl\'{o} N\'emeth}\\
University of Sopron,\\
Institute of Mathematics,\\ 
Sopron, Hungary.\\
and
associate member of USTHB, 
RECITS Laboratory.\\
E-mail: nemeth.laszlo@uni-sopron.hu
}

\end{document}